\newcommand{\largesquare}{\scalebox{1.4}{$\Box$}}
\newcommand{\repcycle}[1]{\largesquare$_{\alpha = 1}^{#1} C_{2m_{\alpha}}$}
\newcommand{\reppath}[1]{\largesquare$_{\alpha = 1}^{#1} P_{2m_{\alpha}}$}
\newcommand{\repcube}[2]{\largesquare$_{\alpha = 1}^{#1} K_{#2, #2}$}
\begin{document}

\title{The minimum orientable genus of the repeated Cartesian product of graphs
}


\author{Marietta Galea         \and
        John Baptist Gauci 
}


\institute{M. Galea \at
              Department of Mathematics, University of Malta, Msida, Malta\\
              \email{marietta.galea@gmail.com}           
           \and
           J.B. Gauci \at
              Department of Mathematics, University of Malta, Msida, Malta\\
              \email{john-baptist.gauci@um.edu.mt}
}

\date{Received: date / Accepted: date}

\maketitle

\begin{abstract}
Determining the minimum genus of a graph is a fundamental optimisation problem in the study of network design and implementation as it gives a measure of non-planarity of graphs. In this paper, we are concerned with determining the smallest value of $g$ such that a given graph $G$ has an embedding on the orientable surface of genus $g$. In particular, we consider the Cartesian product of graphs since this is a well studied graph operation which is often used for modeling interconnection networks. The $s$-cube $Q_i^{(s)}$ is obtained by taking the repeated Cartesian product of $i$ complete bipartite graphs $K_{s,s}$. We determine the genus of the Cartesian product of the $2r$-cube with the repeated Cartesian product of cycles and  of the Cartesian product of the $2r$-cube with the repeated Cartesian product of paths.
\keywords{genus \and Cartesian product \and complete bipartite graph \and cycle \and path}
\subclass{05C10 \and 05C50}
\end{abstract}

\section{Introduction}

Ever since their introduction by Sabidussi in 1959 \cite{Sabidussi1959}, Cartesian product graphs have been applied in many areas, including in the study of interconnection networks and coding theory. An interconnection network
facilitates the transportation of data in a parallel system of processors. Such a system is used to break up a complex task into parallel tasks so that these tasks can be performed concurrently by the separate processors, hence improving system
performance (see, for example, \cite{Wu1981} for a more detailed discussion). Coding theory studies properties of codes, including the development of error-detecting and error-correcting codes and effective data transmission and data storage methods. Many important classes of graphs such as hypercubes, Hamming graphs, and prisms, are Cartesian products. This graph product provides a quick, effective and efficient way of constructing bigger graphs from smaller ones in such a way that the structure of the smaller graphs is preserved. Hence, Cartesian product graphs play a key role in the design and analysis of interconnection networks.

Various disciplines which deal with network design are interested in determining the optimal surface on which a network can be drawn without any of the links between its nodes intersecting each other. Mathematically, this problem revolves around determining the surface of minimum genus that permits a given graph to be drawn on it without edge-crossings. In computer science, the minimum genus of a graph is also a useful parameter in algorithm design since faster algorithms can be designed when the topological structure of a graph is known. These algorithms usually assume that the input graph is embedded on some surface, as for example in \cite{EricksonEtAl2012}. The problem of determining the minimum genus of a graph, albeit very relevant and applicable, has been proved to be substantially difficult from the theoretical, practical and structural perspectives. Garey and Johnson \cite{GareyJohnson1979} listed its complexity as one of the most important open problems.

The problem of embedding graphs on surfaces other than the sphere dates back to the Map Colour Conjecture due to Heawood in 1890 \cite{Heawood1890}. We present the following definitions related to surfaces.

\begin{definition} \cite[pp.~730--731]{HandbookOfGraphTheory}
\begin{enumerate} 
  \item The \emph{open unit disk} is the subset $\{(x,y):x^2+y^2<1\}$ and an \emph{open disk} is any topological space homeomorphic to the open unit disk.
  \item The \emph{unit half-disk} is the subset $\{(x,y):x\geq 0, x^2+y^2<1\}$ and a \emph{half-disk} is any topological space homeomorphic to the unit half-disk.
  \item The \emph{closed unit disk} is the subset $\{(x,y):x^2+y^2\leq 1\}$ and an \emph{closed disk} is any topological space homeomorphic to the closed unit disk.
  \item A \emph{2-manifold} is a topological space in which each point has a neighbourhood that is homeomorphic either to an open disk or to a half-disk.
  \item A \emph{surface} $S$ is a 2-manifold that is compact and without boundary.
\end{enumerate}
\end{definition}

Three of the standard surfaces are the sphere, the torus and the projective plane, respectively denoted by $S_0$, $S_1$ and $N_1$.

\begin{definition} \cite[pp.~733--734]{HandbookOfGraphTheory}
\begin{enumerate}
  \item Let $S$ and $S'$ be two surfaces. The \emph{connected sum $S\# S'$} is obtained by excising the interior of a closed disk in each surface and then gluing the corresponding boundary curves.
  \item A \emph{handle} is added to a surface $S$ by forming the connected sum $S \# S_1$, whereas a \emph{crosscap} is added to a surface $S$ by forming the connected sum $S \# N_1$.
  \item The \emph{orientable surface with $g$ handles}, denoted by $S_g$, is the connected sum of $g$ copies $S_1$. The \emph{genus} of $S_g$ is the number $g$ of handles.
  \item The \emph{nonorientable surface with $k$ crosscaps}, denoted by $N_k$, is the connected sum of $k$ copies of $N_1$. The \emph{nonorientable genus} of $N_k$ is the number $k$ of crosscaps.
\end{enumerate}
\end{definition}

In this work we restrict our attention to the orientable surfaces $S_g$. The reader might find it easier to visualise the adding of a handle to the sphere by removing two disjoint open discs from the sphere and identifying their boundaries with the ends of a truncated cylinder (refer to Figure \ref{FigTorus}). The solid so obtained is, in fact, the torus $S_1$. Repeating this process of attaching $g$ handles to the sphere $S_0$ yields the orientable surface $S_g$.

  \begin{figure}[h!]
        \centering
       \includegraphics[width=\textwidth]{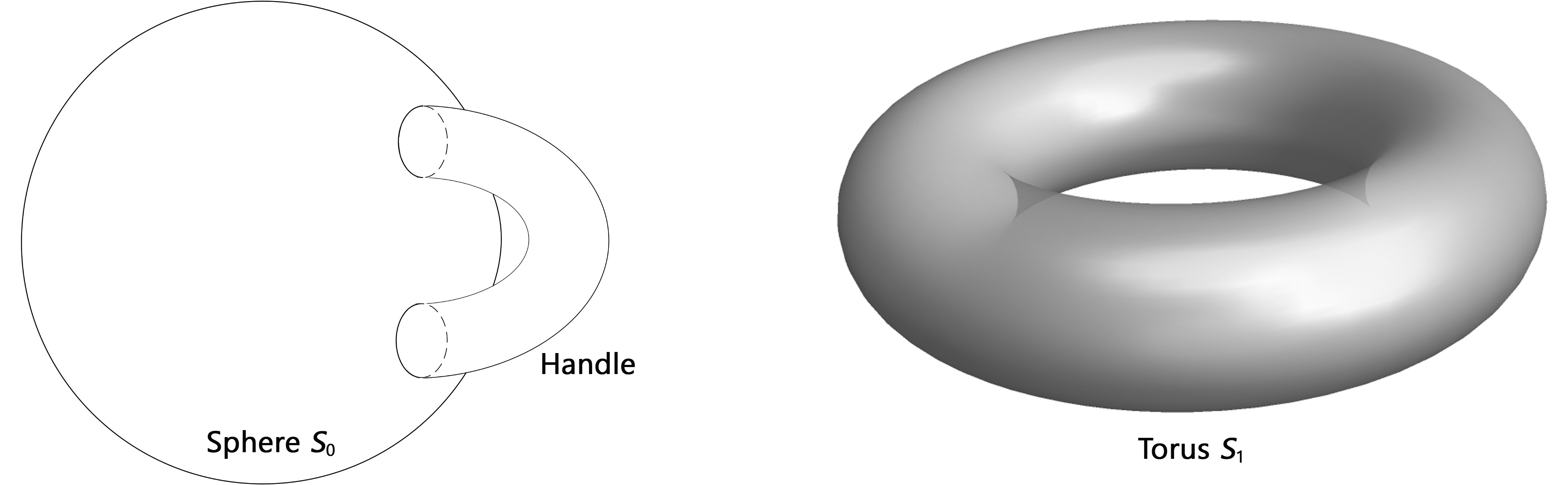}
        \caption{Adding a handle to $S_0$ results in the torus $S_1$.}
        \label{FigTorus}
    \end{figure}

An \textit{embedding} of a graph $G$ is a drawing of the graph $G$ on a surface such that no two edges cross. If every face in a drawing of $G$ is homeomorphic to an open disc, the embedding is referred to as a \textit{2-cell embedding}. Such an embedding in which each face has four sides is called a \textit{quadrilateral embedding}. Given a graph $G$, if $G$ has an embedding on the surface $S_g$ but not on the surface $S_{g-1}$, then $G$ is said to be of \textit{genus} $g$ and we write $g(G)=g$. If $G$ is connected, then an embedding of a graph $G$ of genus $g$ on the surface $S_g$ is a 2-cell embedding \cite{Youngs}. 

It is useful to note that every graph can be embedded on some orientable surface. This is because, if we start with a drawing of a graph on the sphere such that some  of the edges cross, then we can suitably add a handle at each point of intersection so that one of the crossed edges passes over the handle, hence eliminating the respective crossing. Thus, handles are used to remove crossings from a drawing of a graph on a surface.

Thomassen \cite{Thomassen1989} showed that determining the genus of a graph is NP-complete. Nevertheless, results on the genus can be obtained by considering families of graphs with specific properties. 
Two properties which we shall make frequent reference to are connectedness and regularity. A graph is \emph{connected} if there is a path between any two vertices in the graph. A graph is \emph{regular} is all the vertices of the graph have the same degree. For standard graph theoretical terminology we refer the reader to \cite{Chromatic-GT}.

Ringel \cite{Ringel} provided one of the first results in this area when he determined the genus of the complete bipartite graph $K_{m,n}$. White \cite{White1970} considered Cartesian product graphs, where the \textit{Cartesian product} $G_1\square G_2$ of two graphs $G_1$ and $G_2$ has vertex set $V(G_1\square G_2)=\{(v_1,v_2):v_1 \in V(G_1) \text{ and } v_2\in V(G_2)\}$ and edge set $E(G_1\square G_2)=\{(v_1,v_2)(w_1,w_2): (v_1=w_1 \text{ and }  v_2w_2\in E(G_2)) \text{ or } (v_1w_1\in E(G_1) \text{ and } v_2=w_2)\}$. He determined the genus of the Cartesian product of the complete bipartite graph $K_{2m,2m}$ with $K_{2n,2n}$, for all natural numbers $m$ and $n$. White \cite{White1970} also considered three families of graphs obtained by taking repeated Cartesian products, namely the repeated Cartesian product of paths, the repeated Cartesian product of cycles, and the repeated Cartesian product of complete bipartite graphs, respectively defined more precisely as follows:
\begin{gather*}
   H_j=H_{j-1} \square P_{m_j}  \text{where } j\geq 2 \text{ and } H_1=P_{m_1},\\
   G_j=G_{j-1} \square C_{2m_j}  \text{where } j\geq 2 \text{, } m_j \geq 2 \text{ for all } j, \text{ and } G_1=C_{2m_1},\\
   Q_j^{(t)}=Q_{j-1}^{(t)} \square K_{t,t}  \text{where } j\geq 2 \text{ and } Q_1^{(t)}= K_{t,t}.
\end{gather*}

We remark that
\begin{enumerate}[(i)]
  \item $P_n$ and $C_n$ denote the path and the cycle on $n$ vertices, respectively;
  \item the well-known hypercube  $Q_n$ is $H_n$ with $m_j=2$ for all $j\in\{1,2,\ldots,n\}$, whose genus is given by $1+2^{n-3}(n-4)$ (see \cite{BeinekeHarary1965, Ringel1955, White1970} and more recently \cite{HammackKainen2021});
  \item $Q_i^{(t)}$, also referred to as the $t$-cube, is a generalisation of the hypercube $Q_n$ since $Q_n^{(1)}=Q_n$.
\end{enumerate}

Instances of the Cartesian product of graphs were recently studied in \cite{MillichapSalinas2022} and \cite{Sun2023}, where the genus of grid graphs and of the $n$-prism were respectively determined.

Henceforth, to make the results more self-contained, we replace the notation used by White \cite{White1970} with the following:

\begin{gather*}
    H_j = \Box_{\alpha = 1}^j P_{m_{\alpha}},\\
    G_j = \Box_{\alpha = 1}^j C_{2m_{\alpha}},\\
    Q_j^{(t)} = \Box_{\alpha = 1}^j K_{t,t}.\\
\end{gather*}

Pisanski \cite{Pisanski1980} generalised White's result by considering the repeated Cartesian product of connected regular bipartite graphs on an even number of vertices. In particular, letting $G(n,d)$, for $n\geq d\geq 1$, denote a connected regular bipartite graph of degree $d$ on $2n$ vertices, Pisanski determined the genus of the repeated Cartesian product $G(n,d)\square G_1(n_1,d_1)\square G_2(n_2,d_2)\square \ldots \square$ $G_m(n_m,d_m)$ provided that $\max\{d_1,d_2,\ldots,d_m\}\leq d \leq d_1+d_2+\ldots+d_m$.\\

The aim of this paper is to generalise further the work done on the genus of the repeated Cartesian product of graphs. In our main results, respectively stated in Theorem \ref{thm:mainResult} and Theorem \ref{thm:mainResult2} hereunder, we determine the minimum genus of the Cartesian products $$\left(\text{\repcube{j_1}{2r}} \right) \square \left(\text{\repcycle{j_2}}\right) \textrm{~and~} \left(\text{\repcube{j_1}{2r}}\right)\square \left(\text{\reppath{j_2}}\right),$$ where  $j_1$ and $j_2$ are positive integers. We remark that most of these cases are excluded from the result by Pisanski due to the condition imposed therein on the degrees, as highlighted later in this work.

\section{Basic results}

An important and well known result about the Cartesian product of two graphs is given in the following lemma.

\begin{lemma}\label{thm:prodBip}\cite{sabidussi_1957} Let $G$ and $H$ be two non-empty graphs. The Cartesian product $G\square H$ is bipartite if and only if $G$ and $H$ are bipartite.
\end{lemma}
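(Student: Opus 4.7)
The plan is to prove the equivalence by handling the two directions separately, using the standard bipartition characterisation (no odd cycles, or equivalently, the existence of a proper $2$-colouring).

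For the forward direction, I would show that if $G \square H$ is bipartite, then each factor is bipartite. The key observation is that $G$ and $H$ each appear as induced subgraphs of $G \square H$: fixing any vertex $v_0 \in V(H)$, the set $\{(u, v_0) : u \in V(G)\}$ together with the edges of the form $(u_1, v_0)(u_2, v_0)$ (which lie in $E(G\square H)$ exactly when $u_1 u_2 \in E(G)$) yields a copy of $G$. A symmetric argument with a fixed vertex of $G$ produces a copy of $H$. Since any subgraph of a bipartite graph is bipartite, both $G$ and $H$ are bipartite.

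For the reverse direction, assume $G$ and $H$ are bipartite with $2$-colourings $\chi_G : V(G) \to \{0,1\}$ and $\chi_H : V(H) \to \{0,1\}$. I would define a colouring
\[
\chi : V(G \square H) \to \{0,1\}, \qquad \chi(u,v) = \chi_G(u) + \chi_H(v) \pmod{2}.
\]
To verify this is proper, take an arbitrary edge $(u_1, v_1)(u_2, v_2) \in E(G \square H)$. By the definition of the Cartesian product, either $u_1 = u_2$ and $v_1 v_2 \in E(H)$, or $u_1 u_2 \in E(G)$ and $v_1 = v_2$. In the first case, $\chi_H(v_1) \neq \chi_H(v_2)$ while $\chi_G(u_1) = \chi_G(u_2)$, so $\chi(u_1,v_1) \neq \chi(u_2,v_2)$; the second case is symmetric. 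Hence $\chi$ is a proper $2$-colouring, and $G \square H$ is bipartite.

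There is no real obstacle here: both directions are short and essentially definitional. The only mild subtlety is being careful that the ``copy'' of $G$ inside $G \square H$ in the forward direction is literally an induced subgraph, so that it inherits bipartiteness; this follows immediately from the definition of $E(G \square H)$. Everything else is a routine parity check on the product colouring.
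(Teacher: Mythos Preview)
Your proof is correct and is the standard argument for this well-known fact. Note, however, that the paper does not actually prove this lemma: it is stated with a citation to Sabidussi and used as a black box, so there is no proof in the paper to compare against.
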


It follows immediately that the repeated Cartesian product of bipartite graphs is also bipartite. The following is a well-known result relating the genus of a surface and the number of vertices, edges and faces of a graph $G$ which has a $2$-cell embedding on this surface.

\begin{lemma}\label{thm:euler-gen-formula}
    \cite{Euler} (Euler's Generalised Formula) Let $G$ be a connected graph with $n$ vertices and $m$ edges. Consider a $2$-cell embedding of $G$ on an orientable surface $S_g$ with genus $g$, resulting in $f$ faces. Then, $$n + f - m = 2 - 2g.$$
\end{lemma}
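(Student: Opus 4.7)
The plan is to prove the formula by induction on the genus $g$, with the base case $g=0$ being the classical planar Euler formula. The inductive step will proceed by a surgery that cuts a handle off $S_g$ and reduces a 2-cell embedding on $S_g$ to one on $S_{g-1}$, after which the induction hypothesis applies.

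For the base case $g=0$, I would argue by a secondary induction on the number of edges $m$. If $G$ is a tree, then $m=n-1$ and a 2-cell embedding on $S_0$ has exactly one face (the complement of an embedded tree is an open disk), so $n+f-m=(n)+1-(n-1)=2$, as required. If $G$ contains a cycle, pick an edge $e$ lying on a cycle; because $e$ lies on a cycle it borders two distinct faces of the embedding, so removing $e$ leaves a connected graph with a 2-cell embedding on $S_0$ having $n$ vertices, $m-1$ edges and $f-1$ faces. The inductive hypothesis applied to $G-e$ gives $n+(f-1)-(m-1)=2$, hence the claim for $G$.

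For the inductive step, assume the formula holds on $S_{g-1}$ and consider a 2-cell embedding of $G$ on $S_g$. I would choose a non-separating simple closed curve $\gamma$ on $S_g$ that lies inside the union of faces and intersects the 1-skeleton of $G$ transversely in as controlled a way as possible; cutting $S_g$ along $\gamma$ and capping the two resulting boundary circles with disks produces the surface $S_{g-1}$. The induced embedding on $S_{g-1}$ remains a 2-cell embedding after a small adjustment of the cut, and tracking the effect of the surgery on $(n,m,f)$ should give a net change of $+2$ in the quantity $n-m+f$, which matches the change $(2-2(g-1))-(2-2g)=2$ in the Euler characteristic. Combined with the induction hypothesis applied to the reduced embedding, this yields $n+f-m=2-2g$.

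The main obstacle is the surgery step: selecting $\gamma$ so that the resulting embedding on $S_{g-1}$ is still a proper 2-cell embedding of a connected graph, and bookkeeping the changes in $n$, $m$, $f$ rigorously across the cut. A cleaner alternative, which I would probably prefer in a polished write-up, is to appeal to the CW-complex framework: a 2-cell embedding endows $S_g$ with a CW-structure having $n$ zero-cells, $m$ one-cells, and $f$ two-cells, and the Euler characteristic of a CW-complex equals the alternating sum of the numbers of cells. Since the Euler characteristic of $S_g$ is a topological invariant equal to $2-2g$ (as one computes from any convenient triangulation), the identity $n-m+f=2-2g$ follows immediately. Connectedness of $G$ and the 2-cell condition together ensure that this CW-structure genuinely covers $S_g$, which is why both hypotheses are needed.
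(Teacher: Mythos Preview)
The paper does not prove this lemma at all: it is stated with a citation to Euler and used as a black box, so there is no ``paper's own proof'' to compare against. Your write-up therefore goes well beyond what the paper does.

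On the substance of your proposal: the CW-complex argument you sketch at the end is the clean and correct one, and is what most modern references do---a 2-cell embedding is precisely a CW decomposition of $S_g$, and the Euler characteristic is both the alternating cell count and a homotopy invariant equal to $2-2g$. That paragraph alone would suffice.

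Your primary surgery argument, by contrast, is only a plan with an honest admission of its gap, and that gap is real. Cutting $S_g$ along a curve $\gamma$ that merely ``lies inside the union of faces and intersects the 1-skeleton transversely'' does not in general hand you back a 2-cell embedding of a connected graph on $S_{g-1}$: if $\gamma$ crosses edges you have changed the graph, and if $\gamma$ stays inside a single face then after capping that face typically becomes an annulus rather than a disk, destroying the 2-cell property. Making this route rigorous requires either routing $\gamma$ through the 1-skeleton (so the cut is combinatorial and you can track $n,m,f$ exactly) or first subdividing the embedding so that such a $\gamma$ exists---neither of which you carry out. If you want an inductive proof, it is cleaner to induct on the embedding side: show that adding a handle between two faces of a 2-cell embedding decreases $n-m+f$ by exactly $2$ (two faces merge into one, then subdividing to restore 2-cellularity preserves $n-m+f$), which is essentially the converse of your surgery and avoids the existence issue for $\gamma$.
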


A direct consequence of Euler's generalized formula for 2-cell embeddings of a bipartite graph on any surface is stated and proved below.

\begin{lemma}\label{thm:genus}
  If a bipartite graph $G$ with $n$ vertices and $m$ edges has a quadrilateral embedding on a surface, then $g(G)=1+\frac{m}{4}-\frac{n}{2}$.
\end{lemma}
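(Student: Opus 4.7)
The plan is to read off the equality from Euler's formula by making two observations: first, that a quadrilateral embedding pins down $f$ exactly, giving the stated genus as the one on which the embedding sits; and second, that bipartiteness forces this to actually be the minimum genus, rather than merely an attained genus.

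For the first step, I would note that since every face of a quadrilateral embedding is bounded by a closed disk (in particular a $4$-cycle), the embedding is a $2$-cell embedding, so Lemma~\ref{thm:euler-gen-formula} applies. A standard edge--face incidence count now gives $2m = \sum_F \ell(F) = 4f$, since each edge lies on the boundary of exactly two faces and each face boundary has length $4$. Hence $f = m/2$, and substituting into $n + f - m = 2 - 2g$ yields
\begin{equation*}
g \;=\; 1 \;+\; \tfrac{m-n-f}{2} \;=\; 1 \;+\; \tfrac{m}{4} \;-\; \tfrac{n}{2}.
\end{equation*}
This shows $G$ embeds on $S_{1 + m/4 - n/2}$, giving the upper bound $g(G) \le 1 + m/4 - n/2$.

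The second step is the lower bound, and this is where bipartiteness enters. Consider any $2$-cell embedding of $G$ on an orientable surface $S_{g'}$, producing $f'$ faces. Because $G$ is bipartite it has no odd closed walks, so every face boundary has even length at least $4$. The same incidence count therefore gives $2m = \sum_F \ell(F) \ge 4 f'$, that is $f' \le m/2$. Plugging into Euler's formula,
\begin{equation*}
g' \;=\; 1 \;+\; \tfrac{m - n - f'}{2} \;\ge\; 1 \;+\; \tfrac{m}{4} \;-\; \tfrac{n}{2}.
\end{equation*}
Since the minimum genus is attained by a $2$-cell embedding (as noted in the excerpt for connected graphs, invoking Youngs's theorem), we conclude $g(G) \ge 1 + m/4 - n/2$. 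Combining the two bounds gives the claimed equality.

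The only potential pitfall is making sure that the quadrilateral embedding really is a $2$-cell embedding so that Lemma~\ref{thm:euler-gen-formula} can be invoked, and that the lower-bound argument legitimately uses the fact that a genus-realising embedding of a connected graph is $2$-cell. Both points follow from material already recorded in the excerpt, so no genuine obstacle arises; the argument is essentially a double application of Euler's formula, with the face-length bound $\ell(F)\ge 4$ supplied by bipartiteness.
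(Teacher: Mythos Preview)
Your proof is correct and follows essentially the same approach as the paper's: both use the edge--face double count $2m=4f$ together with Euler's formula, and both invoke bipartiteness to force every face length to be at least four so that the quadrilateral embedding is genus-minimising. The paper compresses the argument into a single line by asserting that a quadrilateral embedding of a bipartite graph is minimal, whereas you spell out the upper and lower bounds separately and make explicit the appeal to Youngs's theorem for the lower bound; but the underlying idea is identical.
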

\begin{proof}
     Since $G$ is bipartite, the smallest possible number of edges that can border a face is four, implying that a quadrilateral embedding of $G$ is minimal. Therefore, by double-counting, $4f = 2m$. Substituting this value in Lemma \ref{thm:euler-gen-formula}, we get $g(G)=1+\frac{m}{4}-\frac{n}{2}$.
     \qed
\end{proof}
In \cite{White1970}, White proved the following results.

\begin{proposition} \cite{White1970} ~\
Let $H_j =$ \reppath{j}, $G_j =$ \repcycle{j} and $Q_j^{(r)} =$ \repcube{j}{r}. Then,
\begin{enumerate}[1.]
\item   $g(H_j)=1+\frac{1}{4}\left(\prod_{k=1}^{j}m_k\right)\left(j-2-\sum_{k=1}^{j}\frac{1}{m_k}\right)$  for $j\geq 3$  and even $m_1,m_2,m_3$;
\item   $g(G_j)=1+2^{(j-2)}(j-2)\prod_{k=1}^{j}m_k$  for $j\geq 2$;
\item   $g\left(Q_j^{(r)}\right)=1+2^{j-3}r^j(j-4)$  for $r$ even and $j\geq 1$ or for $r\in\{1,3\}$  and $j\geq 2$.
\end{enumerate}
\end{proposition}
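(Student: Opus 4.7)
My plan is to prove all three formulas uniformly by applying Lemma \ref{thm:genus}, so the task reduces to verifying, for each of $H_j$, $G_j$ and $Q_j^{(r)}$, that the graph is bipartite, that its vertex and edge counts give the stated expression after substitution, and that a quadrilateral embedding actually exists on some orientable surface. Bipartiteness is immediate from Lemma \ref{thm:prodBip} iterated: paths, even cycles $C_{2m_k}$ (which is why only even cycles are used in $G_j$), and complete bipartite graphs are all bipartite, and the Cartesian product preserves bipartiteness.

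The vertex and edge counts come from iterating the standard identities $|V(A\square B)| = |V(A)|\,|V(B)|$ and $|E(A\square B)| = |V(A)|\,|E(B)| + |E(A)|\,|V(B)|$. A short induction gives $\prod_{k=1}^j m_k$ vertices and $\bigl(\prod_{k=1}^j m_k\bigr)\bigl(j - \sum_{k=1}^j 1/m_k\bigr)$ edges for $H_j$; $2^j\prod_{k=1}^j m_k$ vertices and $j\cdot 2^j \prod_{k=1}^j m_k$ edges for $G_j$; and $(2r)^j$ vertices with $j\,r^{j+1}\,2^{j-1}$ edges for $Q_j^{(r)}$. Substituting these into $g = 1 + m/4 - n/2$ and simplifying recovers the three stated closed forms.

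The substantive work is therefore to exhibit a quadrilateral embedding in each family, which is what certifies that the lower bound coming from Lemma \ref{thm:genus} is attained. The approach I would take is inductive. One first produces a quadrilateral embedding of a small base case in each family (for example $C_{2m_1}\square C_{2m_2}$ on the torus, which is classical, as the starting point of the cycle family, and analogous base cases for the other two), and then one iterates by means of a product-of-embeddings construction: given a quadrilateral embedding of a bipartite graph $A$ and a suitably well-behaved bipartite graph $B$, one lays out $|V(B)|$ copies of the embedding of $A$ and stitches them together with the new cross-edges, routing those edges through added handles in such a way that every previously existing $4$-gonal face is subdivided into new $4$-gonal faces and no face of larger length is created.

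The parity and regularity hypotheses in the statement are exactly what make this inductive step close up, and this is where I expect the main obstacle to lie. In part~$1$, requiring $m_1, m_2, m_3$ to be even is what allows the base three-dimensional grid $P_{m_1}\square P_{m_2}\square P_{m_3}$ to admit a quadrilateral embedding; otherwise the boundary vertices of small degree force some faces to be of length larger than four. In part~$3$, the hypothesis that $r$ is even gives access to a quadrilateral embedding of $K_{r,r}$ (in the style of Ringel), while the side case $r\in\{1,3\}$ is handled separately and only from $j\ge 2$ because the required quadrilateral embedding only appears once at least one product has been formed. The genuine difficulty is thus not the arithmetic but the explicit combinatorial description of the rotation systems that realise these quadrilateral embeddings; in White's treatment this is carried out via current- and voltage-graph techniques, and that is the step on which I would expect to spend most of the effort.
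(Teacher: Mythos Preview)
The paper does not actually prove this proposition; it is quoted from White \cite{White1970} as background, so there is no proof in the present paper to compare your attempt against. That said, your outline---invoke Lemma~\ref{thm:genus} to reduce everything to the existence of a quadrilateral embedding, then build that embedding inductively by laying out copies of the smaller product and stitching them together with handles carrying four edges each---is exactly the White--Pisanski surgery method that the paper itself adopts for its own new results (Lemma~\ref{thm:genus2jCubeC2s}, Theorems~\ref{thm:mainResult} and~\ref{thm:mainResult2}), so in spirit you are aligned with both White's original arguments and the paper's methodology. Your vertex and edge counts and the substitution into $g=1+m/4-n/2$ are correct.

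The proposal remains a sketch, however: you explicitly defer the construction of the quadrilateral embeddings and the verification that the required face partitions exist at each inductive step, and that is the entire substance of the result. One small correction on method: White's proofs for these three families proceed by direct surgery and face-partition bookkeeping (in the style of Proposition~\ref{lem:WhitePartitionK2s2s} and Lemma~\ref{additionalQuadFaces} here), not via current- or voltage-graph machinery as you suggest.
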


Ringel \cite{Ringel} proved the following result, which will be useful in the sequel.

\begin{proposition} \label{theo-Ringel} \cite{Ringel}
    The complete bipartite graph $K_{2r,2r}$ has an embedding on a surface of genus $(r-1)^2$.
\end{proposition}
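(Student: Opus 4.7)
The plan is to apply Lemma \ref{thm:genus} after constructing a quadrilateral 2-cell embedding of $K_{2r,2r}$ on an orientable surface. The graph $K_{2r,2r}$ is bipartite with $n=4r$ vertices and $m=(2r)(2r)=4r^2$ edges, so Lemma \ref{thm:genus} forces the genus of any such quadrilateral embedding to be $1+\tfrac{m}{4}-\tfrac{n}{2}=1+r^2-2r=(r-1)^2$. The proposition therefore reduces to producing one quadrilateral embedding of $K_{2r,2r}$ on some orientable surface.

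To construct it, I would use a rotation system with a cyclic $\mathbb{Z}_{2r}$-symmetry. Index the two partite sets by $\{a_i:i\in\mathbb{Z}_{2r}\}$ and $\{b_j:j\in\mathbb{Z}_{2r}\}$, and fix a base cyclic ordering of the neighbours of $a_0$, say $(b_{\pi(0)},b_{\pi(1)},\ldots,b_{\pi(2r-1)})$ for a suitable permutation $\pi$ of $\mathbb{Z}_{2r}$. Define the rotation at every other $a_i$ by translating the indices in the base sequence by $i$, and define the rotations at each $b_j$ analogously from a base ordering at $b_0$. The resulting rotation system determines a 2-cell embedding of $K_{2r,2r}$ on some orientable surface, and the shift $i\mapsto i+1$ acts on this embedding as an automorphism; in particular, all face-orbits under this symmetry have the same shape.

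The key step is to pick the two base rotations so that every face of the resulting embedding is a quadrilateral. Recall that the face trace starting from a directed edge $(u,v)$ advances to the edge $(v,w)$, where $w$ is the successor of $u$ in the cyclic rotation at $v$, and closes when the trace returns to $(u,v)$. Because of the $\mathbb{Z}_{2r}$-symmetry, the verification reduces to a finite number of orbit representatives, and the quadrilateral condition becomes a bounded combinatorial check; concretely, each face trace should close after four steps in the shape $a_i\to b_j\to a_{i+s}\to b_{j+s}\to a_i$ for some nonzero shift $s$. I expect the main obstacle to be exhibiting base orderings that work uniformly for every $r\geq 1$; Ringel's classical approach handles this via a ``current graph'' construction, but any explicit recipe amounting to a suitable $C_4$-quadrangulation of $K_{2r,2r}$ would suffice. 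Once the quadrilateral embedding is in hand, Lemma \ref{thm:genus} directly gives $g(K_{2r,2r})\leq(r-1)^2$, proving the proposition.
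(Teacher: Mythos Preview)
The paper does not prove Proposition~\ref{theo-Ringel}; it is quoted from Ringel~\cite{Ringel} as a known result and used as a black box, so there is no in-paper argument to compare against.

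As to the soundness of your proposal: the reduction is correct --- a quadrilateral embedding of $K_{2r,2r}$ together with Lemma~\ref{thm:genus} would indeed force the genus to be $(r-1)^2$, and the idea of building a $\mathbb{Z}_{2r}$-symmetric rotation system is exactly the classical route. However, the proposal is not a proof as it stands: you explicitly stop short of exhibiting the base rotations $\pi$ (and the companion ordering at $b_0$) and checking that every face trace closes after four steps. That is the entire content of the proposition; everything else is bookkeeping. Saying ``any explicit recipe amounting to a suitable $C_4$-quadrangulation would suffice'' is true but vacuous --- the existence of such a recipe \emph{is} the claim. If you want a self-contained argument, you must either write down the rotations (for instance via Ringel's current-graph scheme, or an explicit index formula you then verify) or invoke Ringel's theorem directly as the paper does.
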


Another useful result by White  in \cite{White1970}, given below, establishes a partition of a set of faces found in a quadrilateral embedding for $K_{2r,2r}$.
\begin{proposition} \cite{White1970} \label{lem:WhitePartitionK2s2s}
    For a quadrilateral embedding of $K_{2r,2r}$, the set of $2r^2$ quadrilateral faces may be partitioned into $2r$ subsets of $r$ faces each so that each subset of $r$ faces contains all $4r$ vertices of the graph.
\end{proposition}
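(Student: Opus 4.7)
My plan is to recast the claimed partition as a proper vertex-coloring of a derived graph on the face-set, then construct this coloring from the combinatorial structure of the embedding.

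First, I would verify the face and incidence counts. Since $K_{2r,2r}$ has $4r$ vertices and $4r^2$ edges, and every face is a $4$-cycle, we have $4f = 2\cdot 4r^2$, giving $f = 2r^2$ faces. Each face, being a $4$-cycle in a simple bipartite graph, has four distinct vertices, exactly two from each bipartition class. A vertex $v$ of degree $2r$ has $2r$ incident edges, each lying on $2$ faces, and each face at $v$ uses $2$ of these edges; thus $v$ lies on exactly $2r$ faces.

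Next I would reformulate. A subset of $r$ faces covering all $4r$ vertices must be pairwise vertex-disjoint, so the desired partition is equivalent to a proper $2r$-coloring of the \emph{face-conflict graph} $\Phi$ (whose vertices are the $2r^2$ faces and whose edges join any two faces sharing a vertex of $K_{2r,2r}$) with each color class of size exactly $r$. The $2r$ faces at any fixed vertex of $K_{2r,2r}$ form a clique in $\Phi$, so $\chi(\Phi)\ge 2r$; the task is to achieve equality with balanced color classes.

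To construct such a coloring for \emph{any} quadrilateral embedding, I would introduce the two $2r$-regular multigraphs $M_A$ and $M_B$ on the bipartition classes $A$ and $B$: each face $F$ contributes an edge to $M_A$ joining its two $A$-vertices and an edge to $M_B$ joining its two $B$-vertices. A balanced $2r$-coloring of $\Phi$ is precisely a face-labeling that is simultaneously a $1$-factorization of $M_A$ and of $M_B$. The main obstacle I expect is forcing these two $1$-factorizations to agree. I would approach this by using the rotation system, which induces a cyclic ordering of the $2r$ faces at each vertex, together with the quadrilateral face-tracing identity, which tightly couples the rotations at diagonally opposite corners of each face; this coupling lets one propagate a local color assignment consistently. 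A Hall-type argument applied to the $(4,2r)$-biregular face-vertex incidence bipartite graph, subject to the constraint that the four incidences of each face receive the same color, would then yield the required common $1$-factorization and complete the proof.
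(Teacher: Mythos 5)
Your preliminary accounting is correct (there are $2r^2$ faces, each face has two vertices in each class, each vertex lies on exactly $2r$ faces, and a set of $r$ faces covering all $4r$ vertices must be vertex-disjoint), and so is the reformulation: the desired partition is exactly a labelling of the faces by $2r$ colours that simultaneously induces a $1$-factorization of both auxiliary multigraphs $M_A$ and $M_B$. But the proof stops precisely where the work begins. K\"onig/Hall-type results give a $1$-factorization of each $2r$-regular bipartite multigraph, and even of the $(4,2r)$-biregular face--vertex incidence graph, \emph{separately}; they say nothing once you impose the side condition that all four incidences of a face receive the same colour. With that constraint the problem becomes the resolvability of a $4$-uniform $2r$-regular hypergraph into perfect matchings, and no Hall-type theorem yields this; a regular uniform hypergraph need not even possess a single perfect matching. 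The sentence claiming that the rotation system ``lets one propagate a local colour assignment consistently'' is an assertion, not an argument, and it is the entire content of the proposition, so the proposal has a genuine gap.

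It is also worth noting that you are attempting a stronger statement than the one the paper actually uses. The paper does not prove this proposition; it cites White, and White establishes the partition for the \emph{specific} quadrilateral embedding of $K_{2r,2r}$ arising from Ringel's construction, where the rotation scheme lets one list the quadrilateral faces explicitly and simply exhibit the $2r$ classes of $r$ mutually vertex-disjoint faces. Your plan, by contrast, targets an arbitrary quadrilateral embedding, and it is not at all clear that the claim holds in that generality; even if it does, proving it would require a substantive new idea in place of the appeal to Hall. A correct route closer to the intended one is to fix Ringel's embedding, write down its faces in terms of the cyclic rotations at the vertices, and verify the partition directly.
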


We end this section by proving the following result which will be needed in the proofs that follow.

\begin{lemma}\label{additionalQuadFaces}
Let $F_1$ and $F_2$ be two quadrilateral faces in an embedding of a graph $G$ on a surface $S_g$ and let $C_1$ and $C_2$ be the two cycles bounding $F_1$ and $F_2$, respectively. Then adding a handle between $F_1$ and $F_2$ carrying four edges such that each edge joins a vertex of $C_1$ to the corresponding vertex of $C_2$ increases the number of quadrilateral faces of the embedding of the graph induced by $G$ on $S_{g+1}$ by two. 
\end{lemma}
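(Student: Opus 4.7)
The plan is to track, step by step, what happens to the two quadrilateral faces $F_1$ and $F_2$ (and nothing else in the embedding) as the handle is attached and the four new edges are routed. First I would fix notation by writing the bounding cycles as $C_1 = a_1 a_2 a_3 a_4$ and $C_2 = b_1 b_2 b_3 b_4$, interpreting the ``corresponding vertex'' pairing as the bijection $a_i \mapsto b_i$ that respects a common cyclic order on the two quadrilaterals; otherwise the four new edges could not be drawn disjointly. Under this convention the added edges are precisely $a_i b_i$ for $i=1,2,3,4$.

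Next I would analyse the topology of the region occupied by the two faces after the handle is attached. Adding a handle between $F_1$ and $F_2$ amounts to excising an open disk from the interior of each face and gluing a cylinder along the two resulting boundary circles. The union of what remains of $F_1$, the cylinder, and what remains of $F_2$ is homeomorphic to an annulus, one boundary component of which is $C_1$ and the other $C_2$. In particular $F_1$ and $F_2$ disappear as faces of the new embedding on $S_{g+1}$, accounting for a loss of two quadrilateral faces.

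Then I would route the four arcs $a_i b_i$ through this annulus. Because the pairing matches the cyclic order on the two boundary cycles, a standard argument on the annulus shows that the four arcs can be drawn pairwise disjointly; once drawn, they partition the annulus into exactly four regions, each bounded by one edge $a_i a_{i+1}$ of $C_1$, the parallel edge $b_i b_{i+1}$ of $C_2$, and the two arcs $a_i b_i$ and $a_{i+1} b_{i+1}$. Each of these regions is therefore a quadrilateral face, and since the rest of the embedding is untouched, the net change in the number of quadrilateral faces is $4 - 2 = 2$, as required.

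The only genuine subtlety, rather than an obstacle, is making precise that the hypothesis of a corresponding-vertex pairing permits a non-crossing routing on the annulus; this is where the cyclic-order interpretation is essential, and I would flag it explicitly before counting faces. Beyond that the argument is a direct local bookkeeping exercise, and no recourse to Euler's formula is needed since the change in face count is tracked entirely within the modified region.
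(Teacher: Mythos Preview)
Your proposal is correct and follows essentially the same approach as the paper: both arguments attach the handle by excising disks from $F_1$ and $F_2$ (destroying two quadrilateral faces), then route the four matching edges over the handle to create four new quadrilateral faces, for a net gain of two. Your version is simply more explicit about the annulus picture and the cyclic-order requirement for a non-crossing routing, which the paper handles in a single parenthetical remark.
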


\begin{proof}
Consider an embedding of $G$ on the surface $S_g$ such that $F_1$ and $F_2$ are two quadrilateral faces bounded by the cycles $C_1$ and $C_2$, respectively. When adding a handle between $F_1$ and $F_2$, we remove an open disc from each face and identify their boundaries with the ends of a truncated cylinder, thus loosing the two quadrilateral faces bounded by $C_1$ and $C_2$. Introducing four edges on the handle such that each edge is adjacent to a vertex on $C_1$ and the corresponding vertex on $C_2$ (such that no crossing between edges is introduced), we get four new faces each of which is a quadrilateral. Thus, the net increase in the number of quadrilateral faces in the embedding of the graph induced by $G$ on $S_{g+1}$ is of two. \qed
\end{proof}

\section{The Cartesian product of the $2r$-cube with cycles}
In this section we use a technique developed by Pisanski \cite{Pisanski1982} and White \cite{White1970}, referred to as the White-Pisanski method, that constructs a quadrilateral embedding for the Cartesian product of two regular bipartite graphs.


Note that the number of vertices of $Q_i^{(2r)}=$ \repcube{i}{2r} is $2^{2i}r^i$ and the number of edges is $i2^{2i}r^{i+1}$, which can be easily derived using induction or the Handshaking Lemma (namely that the sum of the degrees of the vertices of a graph $G$ is equal to twice the number of edges of $G$ \cite{Chromatic-GT}). We first construct a quadrilateral embedding of $\left( \text{\repcube{i}{2r}}\right) \Box\ C_{2s}$ and then derive the genus of this graph using Lemma \ref{thm:genus}.
\begin{lemma}\label{thm:genus2jCubeC2s}
    For $s \geq 2$, the genus of $\left( \text{\repcube{i}{2r}}\right) \Box\ C_{2s} $ is given by $$g\left( \left( \text{\repcube{i}{2r}}\right) \Box\ C_{2s}\right) = 1 + 2^{2i - 1}sr^i(ir - 1).$$
\end{lemma}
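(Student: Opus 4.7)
The plan is to construct a quadrilateral embedding of $Q_i^{(2r)} \square C_{2s}$; Lemma \ref{thm:genus} then yields the stated genus directly. A standard count gives $n = 2^{2i+1}sr^i$ vertices and $m = 2^{2i+1}sr^i(ir+1)$ edges, and $Q_i^{(2r)} \square C_{2s}$ is bipartite by Lemma \ref{thm:prodBip}, so once a quadrilateral embedding exists Lemma \ref{thm:genus} delivers
\[
g = 1 + \frac{m}{4} - \frac{n}{2} = 1 + 2^{2i-1}sr^i(ir-1),
\]
which is the required formula. The task therefore reduces to exhibiting a quadrilateral embedding.

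I would proceed by induction on $i$, following the White--Pisanski construction. For the base case $i = 1$, start from Ringel's quadrilateral embedding of $K_{2r,2r}$ on $S_{(r-1)^2}$ (Proposition \ref{theo-Ringel}) and, by Proposition \ref{lem:WhitePartitionK2s2s}, mark one of the $2r$ subsets of $r$ quadrilateral faces that together meet all $4r$ vertices. Take $2s$ copies of this marked embedding, indexed by the vertices of $C_{2s}$, and insert between each consecutive pair of copies $r$ handles anchored on the corresponding marked faces; each handle routes the four product edges that join the four vertices of that face in one copy to their counterparts in the next, so that all $4r$ required matching edges between consecutive copies are accommodated. Lemma \ref{additionalQuadFaces} then guarantees that every handle preserves quadrilateralness. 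Since $2s-1$ of the $2sr$ tubes merely merge the initially disjoint copies into a single connected surface while the remaining $2sr - 2s + 1$ each contribute $+1$ to the genus, the total is $2s(r-1)^2 + 2sr - 2s + 1 = 1 + 2sr(r-1)$, in agreement with the claimed formula at $i = 1$. For the inductive step, I would rewrite $Q_i^{(2r)} \square C_{2s} \cong \bigl(Q_{i-1}^{(2r)} \square C_{2s}\bigr) \square K_{2r,2r}$ and repeat the same procedure, this time taking $4r$ copies of the inductively built embedding of $Q_{i-1}^{(2r)} \square C_{2s}$ and gluing them with handles placed in accordance with the face partition of $K_{2r,2r}$ furnished by Proposition \ref{lem:WhitePartitionK2s2s}.

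To make this induction function, the hypothesis must be strengthened so that every intermediate quadrilateral embedding comes equipped with a partition of its face set into subsets each covering every vertex, mimicking Proposition \ref{lem:WhitePartitionK2s2s}; this is the main obstacle, since at each round of handle insertions certain quadrilateral faces are destroyed while new ones are created, and one must verify that the refreshed collection of quadrilateral faces still supports the covering partition structure needed for the next round. Once the quadrilateral embedding of $Q_i^{(2r)} \square C_{2s}$ is in hand, Lemma \ref{thm:genus} together with the vertex and edge counts recorded above immediately yields the claimed genus.
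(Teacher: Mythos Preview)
Your overall strategy and the final application of Lemma~\ref{thm:genus} are fine, and the arithmetic checks. However, the construction itself has two concrete gaps.

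First, the base case as written does not go through. You mark \emph{one} of the $2r$ face subsets of $K_{2r,2r}$ from Proposition~\ref{lem:WhitePartitionK2s2s} and anchor all $2sr$ handles on those marked faces. But each copy of $K_{2r,2r}$ sits between two neighbours on $C_{2s}$ and must supply $r$ faces to the link on each side, i.e.\ $2r$ faces in all; a single subset contains only $r$. Two subsets are needed per copy, and you also need the copies in the two colour classes of $C_{2s}$ to carry mirror-image embeddings so that the four edges on each handle land on a matching cyclic order and Lemma~\ref{additionalQuadFaces} actually applies.

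Second, your inductive step reverses the order used in the paper, and this makes the ``main obstacle'' you flag genuinely harder. To form $(Q_{i-1}^{(2r)}\Box C_{2s})\Box K_{2r,2r}$ by the White--Pisanski method you need not two but $2r$ disjoint vertex-covering face subsets in the embedding of $Q_{i-1}^{(2r)}\Box C_{2s}$. These can in fact be assembled (the $2r-2$ unused subsets surviving in each $K_{2r,2r}$ copy, together with two new subsets harvested from opposite faces on the handles of alternate links, give exactly $2r$), but you have not supplied this argument, and without it the induction does not close.

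The paper avoids both issues by building the cube first: it constructs a quadrilateral embedding of $Q_i^{(2r)}$ directly (so that White's $2r$ face subsets are available by Proposition~\ref{lem:WhitePartitionK2s2s} throughout), and only then takes $2s$ mirror-paired copies and links them along $C_{2s}$, which consumes just two of the $2r$ subsets. That order means no strengthened inductive hypothesis on face partitions is ever needed for the cycle factor.
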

\begin{proof}
Recall that \repcube{i}{2r} is denoted by $Q_i^{(2r)}$. First, we minimally embed a copy of $Q_{i}^{(2r)}$ as follows.
    By Lemma \ref{thm:prodBip}, $K_{2r,2r}\Box \ K_{2r,2r}$ is a bipartite graph. Using Proposition \ref{theo-Ringel}, we embed $4r$ disjoint copies of $K_{2r,2r}$ on $4r$ surfaces of genus $(r-1)^2$ such that each embedding is quadrilateral. We partition these copies in two equal parts, corresponding to the vertex-set partition of $K_{2r,2r}$, such that the resulting embeddings of one partition are mirror images of the embeddings in the other partition. 
    
    All that is left is to add edges between the $4r$ disjoint copies to acquire $K_{2r,2r} \Box \ K_{2r,2r}$. Note that $4r$ edges are required to be added between each pair of oppositely oriented copies, which can be done by adding $r$ handles, each carrying four edges. Such an operation will be referred to as a link. For each surface, there are $2r$ links to be made and by Proposition \ref{lem:WhitePartitionK2s2s}, each link is designated a subset of faces from the partition. We just need to check that after partitioning the faces, the corresponding subsets can be matched correctly. For $1 \leq \overleftarrow{j} \leq 2r$, each copy $\overleftarrow{j}$ of $K_{2r,2r}$ is found in one partition, whilst for $1 \leq \overrightarrow{j} \leq 2r$ each copy $\overrightarrow{j}$ of $K_{2r,2r}$ is in the other partition. For some copy $\overleftarrow{j}$, its face partition consists of $2r$ subsets of $r$ faces each. Match subset $\overleftarrow{k}$ of the face partition to subset $\overrightarrow{k}$ in copy $\overrightarrow{j} + \overrightarrow{k}, 1 \leq \overrightarrow{j} + \overrightarrow{k} \leq 2r$ (mod $2r$). If $\overrightarrow{j} + \overrightarrow{k} = \overrightarrow{j}' + \overrightarrow{k}'$ with $\overrightarrow{k} = \overrightarrow{k}'$, then, $\overleftarrow{j} = \overleftarrow{j}'$, therefore, each subset of the partition has exactly one handle attached at each face in that subset. Each handle carries four edges, so that, by Lemma \ref{additionalQuadFaces}, the number of quadrilateral faces increases by two. This process is repeated by successively introducing the complete bipartite graph $K_{2r,2r}$ until a quadrilateral embedding of $Q_{i}^{(2r)}$ is obtained.

     Now, to embed $Q_{i}^{(2r)}\Box \ C_{2s}$, embed $2s$ copies of $Q_{i}^{(2r)}$ as described above. We partition these copies in two equal parts, corresponding to the vertex-set partition of $C_{2s}$, such that the resulting embeddings of one partition are mirror images of the embeddings in the other partition. From each copy of $Q_{i}^{(2r)}$ in one partition, two links are to be made, both to copies of the other partition. For each link, $2^{2i}r^i$ edges are to be added and these are passed over $2^{2i - 2}r^i$ handles with each handle carrying four edges. By the construction given above and Proposition \ref{lem:WhitePartitionK2s2s}, each copy of $Q_{i}^{(2r)}$ has $2r$ disjoint sets of $2^{2i - 2}r^i$ mutually vertex-disjoint quadrilateral faces that cover all $2^{2i}r^i$ vertices of $Q_{i}^{(2r)}$. Since only two links are to be made from each copy of $Q_{i}^{(2r)}$, we choose any two of the available sets. The number of quadrilateral faces in each set is precisely the amount required to perform a link, as previously described. Since the embedding of each copy of $Q_{i}^{(2r)}$ is quadrilateral and each insertion of a handle introduces more quadrilateral faces, the obtained embedding of $Q_{i}^{(2r)}\Box \ C_{2s}$ is quadrilateral. Applying Lemma \ref{thm:genus}, we get
    \begin{equation*}
        \begin{split}
            g(Q_{i}^{(2r)}\Box \ C_{2s}) &= 1 + \frac{2sir^{i+1}2^{2i} + 2sr^{i}2^{2i}}{4} - \frac{2sr^i2^{2i}}{2}\\
            &= 1 + 2^{2i - 1}sr^i(ir - 1).
        \end{split}
    \end{equation*}
\qed
\end{proof}

Let $M^{(j)} = \prod\limits_{k = 1}^j m_k$. Note that the number of vertices of $G_j$ is $2^jM^{(j)}$ and the number of edges is $j2^jM^{(j)}$, which follows immediately by induction. We are now in a position to construct a quadrilateral embedding of $\left(\text{\repcube{i}{2r}} \right) \Box \left( \text{\repcycle{j}} \right)$ and, consequently, determine the genus of this graph.
\begin{theorem}\label{thm:mainResult}
    Let $m_{\alpha} \geq 2$, for all $\alpha \in \{1, \ldots, j\}$. Then, the genus of the graph $\left(\text{\repcube{i}{2r}} \right) \Box \left( \text{\repcycle{j}} \right)$ is given by $$g\left(\left(\text{\repcube{i}{2r}} \right) \Box \left( \text{\repcycle{j}} \right)\right) = 1 + M^{(j)}2^{2i + j - 2}r^i(j+ir-2).$$
\end{theorem}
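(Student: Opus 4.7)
The plan is to proceed by induction on $j$ using the White--Pisanski technique, with Lemma \ref{thm:genus2jCubeC2s} serving as the base case $j=1$ (one checks that substituting $j=1$, $m_1=s$ into the claimed formula recovers $1+2^{2i-1}sr^i(ir-1)$). For the inductive step, I would invoke the associativity of the Cartesian product to write the target graph as $H \Box C_{2m_j}$, where $H := \left(\text{\repcube{i}{2r}}\right) \Box \left(\text{\repcycle{j-1}}\right)$ is equipped with the quadrilateral embedding guaranteed by the inductive hypothesis. I would then adjoin the factor $C_{2m_j}$ by lifting $H$ to $2m_j$ disjoint copies, splitting these into two mirror-symmetric bipartition classes, and linking cyclically adjacent copies through handles, exactly as in the second half of the proof of Lemma \ref{thm:genus2jCubeC2s}.

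The structural invariant to maintain throughout the induction is that the quadrilateral embedding of $H$ admits at least two disjoint families of pairwise vertex-disjoint quadrilateral faces, each family covering every vertex of $H$. Under this invariant, each of the $2m_j$ links between cyclically adjacent copies consists of $|V(H)| = 2^{2i+j-1}r^i M^{(j-1)}$ edges carried over $|V(H)|/4$ handles, with each handle accommodating four edges joining a quadrilateral face in one copy to the corresponding face in its mirror partner, as in Lemma \ref{additionalQuadFaces}. The two distinguished face families furnish precisely the faces required for the two links emanating from each copy, and, since by Lemma \ref{additionalQuadFaces} every inserted handle creates two fresh quadrilateral faces, the resulting embedding of $H \Box C_{2m_j}$ is again quadrilateral and inherits an analogous face-partition structure suitable for the next induction step.

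Since the Cartesian product of bipartite graphs is bipartite by Lemma \ref{thm:prodBip}, quadrilateral embeddings are genus-minimal, and hence Lemma \ref{thm:genus} yields both the upper and lower bounds simultaneously. A short edge count gives $|V| = 2^{2i+j}r^i M^{(j)}$ and $|E| = 2^{2i+j}r^i M^{(j)}(ir+j)$, so that
\begin{equation*}
g = 1 + \tfrac{|E|}{4} - \tfrac{|V|}{2} = 1 + M^{(j)}\,2^{2i+j-2}\,r^i\,(j+ir-2),
\end{equation*}
matching the claimed value.

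The main obstacle I anticipate is the bookkeeping needed to verify that the structural invariant is preserved across the induction; specifically, one must exhibit two vertex-disjoint face families in the new embedding of $H \Box C_{2m_j}$ using the old families in each copy of $H$ together with the extra quadrilateral faces introduced on the newly inserted handles. This mirrors the partition argument that exploited Proposition \ref{lem:WhitePartitionK2s2s} at the innermost layer in Lemma \ref{thm:genus2jCubeC2s}, but here the guaranteed partition of faces must be propagated correctly through each successive cycle factor, ensuring in particular that the two links emanating from a given copy of $H$ can be chosen simultaneously and edge-disjointly.
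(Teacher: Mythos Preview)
Your proposal is correct and follows essentially the same route as the paper: induction on $j$ with Lemma \ref{thm:genus2jCubeC2s} as the base, the same structural invariant (two disjoint vertex-covering families of pairwise vertex-disjoint quadrilateral faces), and the same linking of $2m_j$ mirror-paired copies via handles carrying four edges each. The obstacle you anticipate is resolved in the paper exactly as you suspect it should be: the two new face families are obtained by selecting, from alternate links, opposite faces on each handle, and then taking the remaining two faces on those same handles for the second family; this yields $2m_{j+1}\cdot r^i 2^{2i+j-2}M^{(j)}$ faces per family, covering all vertices of the product.
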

\begin{proof}
    Recall that \repcube{i}{2r} is denoted by $Q_i^{(2r)}$ and \repcycle{j} is denoted by $G_j$. Let $G = Q_{i}^{(2r)}\Box \ G_j$. The proof proceeds by induction on $j$, where the property $P(j)$ is as follows:
    There is a quadrilateral embedding of $G$, with the number of faces $f_{j} = r^i2^{2i+j-1}M^{(j)}(j+ir)$, including two disjoint sets such that each one has $r^i2^{2i+j-2}M^{(j)}$ mutually vertex-disjoint quadrilateral faces and contains all the $r^i2^{2i + j}M^{(j)}$ vertices of $G$.

    For $j = 1$, the quadrilateral embedding of $Q_{i}^{(2r)} \Box \ C_{2m_1}$ is given by Lemma \ref{thm:genus2jCubeC2s}. The number of faces $f_1$ is given by $f_1=\frac{1}{2}\left|E(Q_{i}^{(2r)} \Box \ C_{2m_1})\right|$, and thus
    \begin{align*}
        f_1 &= \frac{(2m_1)(2^{2i}r^i) + (2m_1)(i2^{2i}r^{i+1})}{2}\\
        &= m_12^{2i}r^i + im_12^{2i}r^{i+1}\\
        &= 2^{2i}m_1r^i(1 + ir).
    \end{align*}
    which conforms to the number of expected faces as stated by $P(1)$. The first disjoint set of faces may be chosen by selecting opposite faces from alternate links, and the second set of faces may be formed by choosing the remaining faces from the same handles in each chosen link. Therefore, each set would contain $2m_1(2^{2i - 2}r^i) = 2^{2i - 1}m_1r^i$ quadrilateral faces. This follows since half of the $2m_1$ links are chosen, where each link has $2^{2i - 2}r^i$ handles and only two faces are chosen from each handle. The faces in each set are mutually vertex-disjoint and each set contains all the vertices of $Q_{i}^{(2r)} \Box \ C_{2m_1}$. Therefore, $P(1)$ holds.

    Assuming $P(j)$ is true, we will show that $P(j+1)$ also holds. The graph we are considering is $G' = Q_{i}^{(2r)} \Box \ G_{j+1}$. First, minimally embed $2m_{j+1}$ copies of $G$ as described by $P(j)$. Similarly to the previous result, we partition these copies, corresponding to the vertex-set partition of $C_{2m_{j+1}}$, in two equal parts such that the embeddings of one partition are mirror images of the embeddings in the other partition. From each copy of $G$ from one partition, two links are to be made, both to copies of the other partition. The number of vertices of $G$ is $r^i 2^{2i+j} M^{(j)}$, and thus we need to pass that many edges over a link joining two copies of $G$ with different orientations. Therefore, the number of handles required for each link is $\frac{1}{4}\left(r^i 2^{2i+j} M^{(j)}\right)$ since each handle carries four edges. Using $P(j)$, we have two disjoint sets each of $r^i 2^{2i+j-2} M^{(j)}$ mutually vertex-disjoint quadrilateral faces and thus we have a sufficient number of faces such that we can construct these links using $r^i2^{2i+j-2}M^{(j)}$ handles, where each handle carries four edges so that each new face formed is a quadrilateral. In this manner, the graph $G'$ is embedded after constructing the $2m_{j+1}$ links.

    The number of faces $f_{j+1}$ in $G'$ is $f_{j+1} = 2m_{j+1}f_{j} + \tilde{f}$, where $\tilde{f}$ denotes the number of new faces introduced by the handles. Note that $\tilde{f} = 4m_{j+1}(r^i2^{2i+j-2}$ $M^{(j)})$ since we have $2m_{j+1}$ links, with each link requiring $r^i2^{2i+j-2}M^{(j)}$ handles and, by Lemma \ref{additionalQuadFaces}, the difference in the number of quadrilateral faces is two per handle. Therefore,
    \begin{equation*}
        \begin{split}
            f_{j+1} &= 2m_{j+1}(r^i2^{2i+j-1}M^{(j)}(j+ir)) + 4m_{j+1}(r^i2^{2i+j-2}M^{(j)})\\
            &= r^{i}2^{2i+j}M^{(j+1)}(j+ir) + r^{i}2^{2i+j}M^{(j+1)}\\
            &= r^{i}2^{2i+j}M^{(j+1)}(j+ir+1).
        \end{split}
    \end{equation*}
    This satisfies the expected number of faces as stated by $P(j+1)$. Next, we have to construct the two disjoint sets of faces. The first set of faces may be chosen by selecting opposite faces from alternate links, and the second set of faces is formed by choosing the remaining faces from the same handles in each chosen link. Therefore, each disjoint set would contain $2m_{j+1}(r^i2^{2i+j-2}M^{(j)}) = 2^{2i + j - 1}r^iM^{(j+1)}$ quadrilateral faces. This follows since half of the $2s$ links are chosen, where each link has $r^i2^{2i+j-2}M^{(j)}$ handles and only two faces are chosen from each handle. The faces in each set are mutually vertex-disjoint and each set contains all the vertices of $G'$. Therefore, $P(j+1)$ follows from $P(j)$ and since $G$ is bipartite by Lemma \ref{thm:prodBip}, we calculate the genus using Lemma \ref{thm:genus} to get
    \begin{equation*}
        \begin{split}
            g(Q_{i}^{(2r)}\Box \ G_j) &= 1 + \frac{r^i2^{2i+j}M^{(j)}(j + ir)}{4} - \frac{2^{2i+j}r^iM^{(j)}}{2}\\
            &= 1 + M^{(j)}2^{2i + j - 2}r^i(j+ir-2).
        \end{split}
    \end{equation*}
\qed
\end{proof}

As a direct corollary of the above result, we establish the genus of $K_{2r,2r} \Box G_j$, addressing the case when $r > j$ which is excluded from Pisanski's result.

\begin{corollary}
    The genus of $\left( \text{\repcycle{j}} \right) \Box \ K_{2r,2r}$ is given by $$ g\left(\left( \text{\repcycle{j}} \right) \Box \ K_{2r,2r}\right) = 1 + r2^{j}M^{(j)}(j + r - 2).$$
\end{corollary}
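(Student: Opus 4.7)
The plan is very short because this corollary is explicitly advertised as a direct consequence of Theorem \ref{thm:mainResult}. I would begin by observing that the Cartesian product of graphs is commutative up to isomorphism, so $\left(\text{\repcycle{j}}\right) \Box K_{2r,2r} \cong K_{2r,2r} \Box \left(\text{\repcycle{j}}\right)$. Moreover, by the very definition of $Q_i^{(2r)}$ given in the introduction, we have $Q_1^{(2r)} = K_{2r,2r}$, so the graph under consideration is exactly the $i=1$ instance of the family $\left(\text{\repcube{i}{2r}}\right) \Box \left(\text{\repcycle{j}}\right)$ treated in Theorem \ref{thm:mainResult}.

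Next I would simply substitute $i=1$ into the closed-form formula provided by Theorem \ref{thm:mainResult}. Indeed,
\begin{equation*}
g\left(Q_1^{(2r)} \Box \left(\text{\repcycle{j}}\right)\right) = 1 + M^{(j)} 2^{2(1)+j-2} r^{1}\bigl(j+(1)r-2\bigr) = 1 + r\, 2^{j} M^{(j)}(j+r-2),
\end{equation*}
which is exactly the stated expression. Since the hypothesis $m_\alpha \geq 2$ for all $\alpha \in \{1,\ldots,j\}$ of the corollary matches the hypothesis of Theorem \ref{thm:mainResult}, no additional conditions need to be verified.

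There is essentially no obstacle here; the only thing to remark on is why the case $r > j$ is genuinely covered. Pisanski's theorem, as recalled in the introduction, requires $\max\{d_1,\ldots,d_m\} \leq d \leq d_1 + \cdots + d_m$; taking $K_{2r,2r}$ as the factor of degree $d = 2r$ and the cycles $C_{2m_\alpha}$ as the factors of degrees $d_\alpha = 2$, the upper bound becomes $2r \leq 2j$, i.e.\ $r \leq j$. Hence the regime $r > j$ falls outside Pisanski's framework, and the corollary above fills this gap by specialising Theorem \ref{thm:mainResult}. I would close the proof with a one-line remark to this effect, so that the novelty of the statement relative to \cite{Pisanski1980} is clear to the reader.
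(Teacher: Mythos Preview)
Your proposal is correct and matches the paper's approach exactly: the paper presents this as a direct corollary of Theorem~\ref{thm:mainResult} with no further argument, and your substitution of $i=1$ together with the commutativity of the Cartesian product is precisely the intended reasoning. Your closing remark about Pisanski's degree condition also mirrors the paper's own comment that the case $r>j$ is excluded from Pisanski's result.
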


\section{The Cartesian product of the $2r$-cube with paths}
Using Lemma \ref{thm:genus2jCubeC2s}, we can establish the genus of $\left(\text{\repcube{i}{2r}}\right) \Box \ P_{2s}$ as follows.
\begin{lemma}\label{cor:genusRepK2r2rP2s}
    The genus of $\left(\text{\repcube{i}{2r}}\right) \Box \ P_{2s}$ is given by $$g\left(\left(\text{\repcube{i}{2r}}\right) \Box \ P_{2s}\right) = 1 + 2^{2i - 2}r^i(2s(ir - 1) - 1).$$
\end{lemma}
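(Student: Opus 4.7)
The plan is to mimic the construction used in the proof of Lemma \ref{thm:genus2jCubeC2s}, exploiting the fact that $P_{2s}$ is obtained from $C_{2s}$ by deleting a single edge. Since $Q_i^{(2r)} \Box P_{2s}$ is bipartite by Lemma \ref{thm:prodBip}, exhibiting a quadrilateral embedding is enough: Lemma \ref{thm:genus} then delivers the minimum genus immediately.

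To build such an embedding, I would first take $2s$ disjoint copies of $Q_i^{(2r)}$, each quadrilaterally embedded exactly as in the first half of the proof of Lemma \ref{thm:genus2jCubeC2s}, and partition them into two equal sets of opposite orientation. Then, instead of constructing the $2s$ links that would close up the cycle, I would form only the $2s - 1$ links corresponding to the edges of $P_{2s}$. Each link is realised, exactly as in the cycle case, by $2^{2i-2} r^i$ handles each carrying four edges, and it consumes one of the $2r$ disjoint subsets of mutually vertex-disjoint quadrilateral faces supplied by the iterated application of Proposition \ref{lem:WhitePartitionK2s2s}. Since every copy of $Q_i^{(2r)}$ is incident to at most two links in the path structure (and the two endpoint copies to exactly one), the face supply is more than sufficient, and Lemma \ref{additionalQuadFaces} ensures that each handle preserves the quadrilateral character of the embedding.

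With a quadrilateral embedding in hand, applying Lemma \ref{thm:genus} becomes routine: one counts $n = 2^{2i+1} s r^i$ vertices and $m = 2^{2i} r^i (2sir + 2s - 1)$ edges in $Q_i^{(2r)} \Box P_{2s}$, and substitutes into $g = 1 + m/4 - n/2$. A straightforward simplification yields the claimed value $1 + 2^{2i-2} r^i (2s(ir - 1) - 1)$; as a useful sanity check, this equals $g(Q_i^{(2r)} \Box C_{2s}) - 2^{2i-2} r^i$, which is precisely the genus of the cycle product minus the number of handles removed together with the dropped link.

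No serious obstacle arises here; the only point requiring explicit verification is that trimming the construction from $2s$ links down to $2s - 1$ leaves a legitimate quadrilateral embedding, and this is immediate because the path variant consumes strictly fewer face subsets per copy than the cycle variant. All the heavy lifting, namely the quadrilateral embedding of $Q_i^{(2r)}$ itself and the White--Pisanski linking technique, has already been accomplished in Lemma \ref{thm:genus2jCubeC2s}.
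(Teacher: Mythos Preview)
Your proposal is correct and follows essentially the same approach as the paper. The only cosmetic difference is that the paper first builds the full quadrilateral embedding of $Q_i^{(2r)}\Box C_{2s}$ and then removes the $2^{2i-2}r^i$ handles (and their edges) comprising one link, whereas you build only $2s-1$ links from the outset; the resulting embedding and the final computation $g = g(Q_i^{(2r)}\Box C_{2s}) - 2^{2i-2}r^i$ are identical.
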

\begin{proof}
    Recall that \repcube{i}{2r} is denoted by $Q_i^{(2r)}$ and let $G=Q_{i}^{(2r)}\Box \ P_{2s}$. Note that $G$ is bipartite by Lemma \ref{thm:prodBip}. We start by embedding $G' = Q_{i}^{(2r)}\Box \ C_{2s}$ as given in Lemma \ref{thm:genus2jCubeC2s}. To obtain $G$, we only need to remove $4^ir^i$ edges from $G'$ between a copy of $Q_{i}^{(2r)}$ found in one partition and another copy from the other partition. The only way these two copies are connected is through $2^{2i-2}r^i$ handles, therefore, we remove these $2^{2i-2}r^i$ handles and the edges on them. The faces we have reintroduced are quadrilateral, since the embedding of $G'$ is quadrilateral, producing a quadrilateral embedding for $G$. The genus of $G$ is given by,
    \begin{align*}
        g(Q_{i}^{(2r)}\Box \ P_{2s}) &= 1 + 2^{2i - 1}sr^i(ir - 1) - 2^{2i - 2}r^i\\
        &= 1 + 2^{2i-2}r^i(2s(ir-1) - 1).
    \end{align*}
\qed
\end{proof}

Similarly to the repeated product of even cycles, we denote by $M^{(j)}$ the product $\prod_{\alpha=1}^{j} m_{\alpha}$ and by $m^{(j)}$ the sum $\sum_{\alpha=1}^{j} \frac{1}{m_{\alpha}}$.
Using Lemma \ref{cor:genusRepK2r2rP2s}, we establish the genus of $\left(\text{\repcube{i}{2r}}\right)\Box \left(\text{\reppath{j}}\right)$.
\begin{theorem} \label{thm:mainResult2}
    The genus of $\left(\text{\repcube{i}{2r}}\right)\Box \left(\text{\reppath{j}}\right)$ is given by $$g\left(\left(\text{\repcube{i}{2r}}\right)\Box \left(\text{\reppath{j}}\right)\right) = 1 + 2^{2i + j - 3}r^iM^{(j)}(2ir + 2j - m^{(j)} - 4).$$
\end{theorem}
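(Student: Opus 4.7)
The plan is to prove Theorem~\ref{thm:mainResult2} by induction on $j$, in close analogy with the argument for Theorem~\ref{thm:mainResult} but with each cycle factor $C_{2m_\alpha}$ replaced by the corresponding path $P_{2m_\alpha}$. The key structural difference is that $P_{2m_\alpha}$ has one fewer edge than $C_{2m_\alpha}$, which translates into one fewer link at each stage of the recursive quadrilateral embedding.

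For the base case $j = 1$, the graph $Q_i^{(2r)} \Box P_{2m_1}$ admits a quadrilateral embedding by Lemma~\ref{cor:genusRepK2r2rP2s}, obtained from the embedding of $Q_i^{(2r)} \Box C_{2m_1}$ by deleting a single link's worth of handles. Two disjoint sets of mutually vertex-disjoint quadrilateral faces covering all vertices are retained (inherited from $P(1)$ in the proof of Theorem~\ref{thm:mainResult}, modified locally near the deleted link), and serve as the strengthened induction hypothesis.

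For the inductive step, I would take $2m_j$ mirror-image copies of the quadrilateral embedding of $Q_i^{(2r)} \Box H_{j-1}$, partitioned according to the bipartition of $P_{2m_j}$, and connect consecutive copies along the path using $2m_j - 1$ links. Each link uses $2^{2i+j-3} r^i M^{(j-1)}$ handles carrying four edges each, and by Lemma~\ref{additionalQuadFaces} each handle yields a net increase of two quadrilateral faces. A routine count then gives the total edge count $2^{2i+j} r^i M^{(j)}(ir + j - m^{(j)}/2)$ and the vertex count $2^{2i+j} r^i M^{(j)}$ of $Q_i^{(2r)} \Box H_j$, and Lemma~\ref{thm:genus} delivers the claimed genus after routine simplification.

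The chief obstacle is preserving the induction hypothesis: producing two new disjoint sets of mutually vertex-disjoint quadrilateral faces covering all vertices of the extended embedding. In Theorem~\ref{thm:mainResult} this was handled by the symmetric alternating-link selection, which relied on having an even number $2m_j$ of links. In the path setting the number of links $2m_j - 1$ is odd, and the two endpoint copies of $Q_i^{(2r)} \Box H_{j-1}$ have only one attached link each, leaving one entire face-set from each endpoint copy unused. I expect to combine these two untouched endpoint face-sets with a careful selection of newly created handle faces on the $2m_j - 2$ interior links to assemble the two required disjoint face-sets, and to verify by direct count that they contain the expected $2\cdot 2^{2i+j-2} r^i M^{(j)}$ vertex-disjoint faces covering all $2^{2i+j} r^i M^{(j)}$ vertices.
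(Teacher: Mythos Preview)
Your inductive framework---base case via Lemma~\ref{cor:genusRepK2r2rP2s}, inductive step via $2m_{j+1}$ mirrored copies joined by $2m_{j+1}-1$ links, and the genus computed from Lemma~\ref{thm:genus}---is exactly the paper's approach.

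The only divergence is in what you flag as the chief obstacle, and there you have made life harder than necessary. The alternating-link selection from Theorem~\ref{thm:mainResult} does \emph{not} depend on the number of links being even. With links labelled $1,2,\ldots,2m_{j+1}-1$ along the path, choosing the odd-indexed ones $1,3,\ldots,2m_{j+1}-1$ yields exactly $\lceil(2m_{j+1}-1)/2\rceil=m_{j+1}$ links that pair the $2m_{j+1}$ copies perfectly as $(1,2),(3,4),\ldots,(2m_{j+1}-1,2m_{j+1})$; there is no wraparound issue, so the parity worry that is genuine for odd cycles simply does not arise for paths. This is precisely what the paper does: it selects these $m_{j+1}$ alternate links and takes opposite pairs of handle-faces on each to obtain the two disjoint sets of $2^{2i+j-1}r^iM^{(j+1)}$ vertex-disjoint quadrilaterals, exactly as in the cycle case.

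Your proposed workaround via the unused endpoint face-sets combined with handle-faces on interior links is therefore unnecessary; it may be salvageable, but the bookkeeping you give is already slightly off (excluding both end links leaves $2m_{j+1}-3$ links, not $2m_{j+1}-2$), and there is no reason to pursue it when the straightforward alternating selection goes through unchanged.
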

\begin{proof}
    Recall that \repcube{i}{2r} is denoted by  $Q_i^{(2r)}$ and that \reppath{j} is denoted by  $H_j$. Let $G=Q_{i}^{(2r)}\Box \ H_j$. The proof proceeds by induction on $j$, where the property $P(j)$ is as follows:
    There is a quadrilateral embedding of $G$, with $f_{j} = 2^{2i+j-2}r^iM^{(j)}(2j+2ir - m^{(j)})$, including two disjoint sets of $r^i2^{2i+j-2}M^{(j)}$ mutually vertex-disjoint quadrilateral faces each, such that each set contains all the $r^i2^{2i + j}M^{(j)}$ vertices of $G$.

    For $j = 1$, the quadrilateral embedding of $Q_{i}^{(2r)}\Box \ P_{2m_1}$ is given by Lemma \ref{cor:genusRepK2r2rP2s}.
    The number of faces $f_1$ is given by $f_1=\frac{1}{2}\left|E(Q_{i}^{(2r)} \Box \ P_{2m_1})\right|$, and thus
     \begin{align*}
        f_1 &= \frac{(2m_1-1)(2^{2i}r^i) + (2m_1)(i2^{2i}r^{i+1})}{2}\\
        &= 2^{2i-1} r^i (2m_1 - 1 + 2m_1 i r)\\
        &= 2^{2i}m_1r^i\left(1 + ir-\tfrac{1}{2m_1}\right).
    \end{align*}
    which conforms to the number of expected faces as stated by $P(1)$. The first disjoint set of faces may be chosen by selecting opposite faces from alternate links, and the second set of faces may be formed by choosing the remaining faces from the same handles in each chosen link. Therefore, each disjoint set would contain $2m_1(2^{2i - 2}r^i) = 2^{2i - 1}r^im_1$ quadrilateral faces. This follows since from $2m_1 - 1$ links, $\lceil \frac{2m_1 - 1}{2} \rceil$ are chosen, where each link has $2^{2i - 2}r^i$ handles and only two faces are chosen from each handle. The faces in each set are mutually vertex-disjoint and each set contains all the vertices of $Q_{i}^{(2r)} \Box \ P_{2m_1}$, therefore, $P(1)$ holds.

    Assuming $P(j)$ is true, we will show that $P(j+1)$ also holds. The graph we are considering is $G' = Q_{i}^{(2r)} \Box \ H_{j+1}$. First, minimally embed $2m_{j+1}$ copies of $G$ as described by $P(j)$. We partition these copies in two equal parts,
    corresponding to the vertex-set partition of $P_{2m_{j+1}}$ (refer to Figure \ref{fig:thm2}) such that the resulting embeddings of one partition are mirror images of the embeddings in the other partition. From each copy of $G$ of the first partition, except for the end copy, two links are to be made, both to copies of the second partition. From the last copy of $G$ of the first partition, one link is made is to a copy in the second partition. We can construct these links using $r^i2^{2i+j-2}M^{(j)}$ handles, where each handle carries four edges so that each new face formed is a quadrilateral. In this way, the graph $G'$ is embedded after constructing the $2m_{j+1}-1$ links.

    \begin{figure}[h!]
        \centering
        \includegraphics[width=\textwidth]{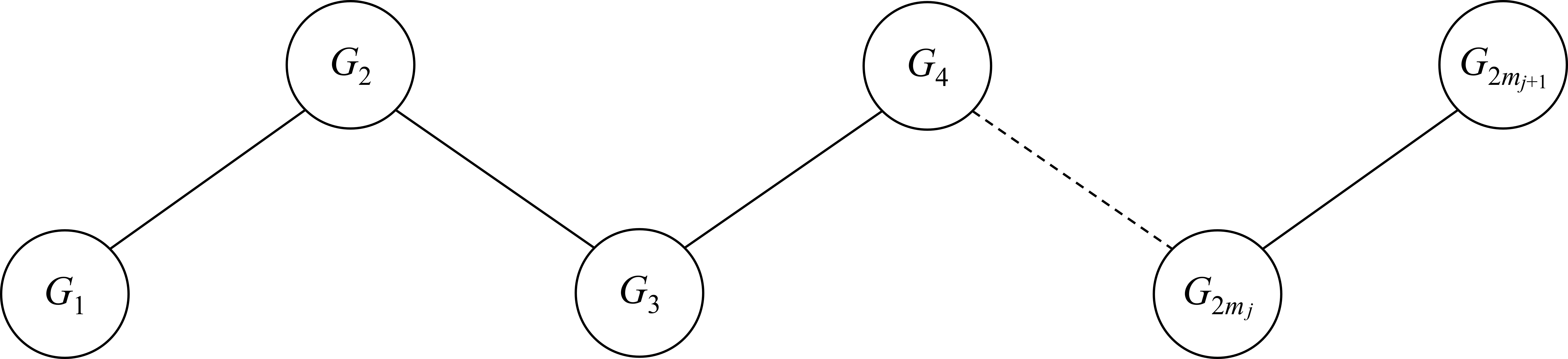}
        \caption{Connecting the links between the two partitions of copies of $G$.}
        \label{fig:thm2}
    \end{figure}

    The number of faces in $G'$ is $f_{j+1} = 2m_{j+1}f_{j} + \tilde{f}$, where $\tilde{f}$ denotes the number of new faces introduced by the handles. Note that \linebreak $\tilde{f} = 2 (2m_{j+1}-1) (r^i2^{2i+j-2} M^{(j)})$ since we have $2m_{j+1} - 1$ links, with each link requiring  $r^i 2^{2i+j-2}M^{(j)}$ handles and, by Lemma \ref{additionalQuadFaces}, the difference in the number of faces is two per handle. Therefore,
    \begin{equation*}
        \begin{split}
            f_{j+1} &= 2m_{j+1}(2^{2i+j-2}r^iM^{(j)}(2j+2ir - m^{(j)})) + (2m_{j+1}-1)(r^i2^{2i+j-2} M^{(j)})\\
            &= r^{i}2^{2i+j-1}M^{(j+1)}\left(2ir + 2j - m^{(j)} + 2 - \frac{1}{m_{j+1}}\right)\\
            &= 2^{2i+j-1}r^{i}M^{(j+1)}(2ir + 2(j+1) - m^{(j+1)}).
        \end{split}
    \end{equation*}
    This satisfies the expected number of faces as stated by $P(j+1)$. Next, we have to construct the two disjoint sets of faces. The first disjoint set of faces may be chosen by selecting opposite faces from alternate links. The second set of faces may be formed by choosing the remaining faces from the same handles in each chosen link. Therefore, the disjoint set would contain $2\left\lceil\frac{2m_{j+1} - 1}{2} \right\rceil(r^i2^{2i+j-2}M^{(j)}) = m_{j+1}2^{2i + j - 1}r^iM^{(j)} = 2^{2i + j - 1}r^iM^{(j+1)}$ quadrilateral faces. This follows since half of the $2m_{j+1} - 1$ links are chosen, where each link has $r^i2^{2i+j-2}M^{(j)}$ handles and only two faces are chosen from each handle. The faces in each set are mutually vertex-disjoint and each set contains all the vertices of $G'$. Therefore, $P(j+1)$ follows from $P(j)$ and since $G$ is bipartite by Lemma \ref{thm:prodBip}, we calculate the genus using Lemma \ref{thm:genus} to get
    \begin{equation*}
        \begin{split}
            g(Q_{i}^{(2r)}\Box \ H_j) &= 1 + \frac{r^i2^{2i+j-1}M^{(j)}(2j + 2ir - m^{(j)})}{4} - \frac{2^{2i+j}r^iM^{(j)}}{2}\\
            &= 1 + r^i2^{2i+j-3}M^{(j)}(2j + 2ir - m^{(j)}) - 2^{2i+j-1}r^iM^{(j)}.\\
            &= 1 + 2^{2i+j-3}r^iM^{(j)}(2ir + 2j - m^{(j)} - 4).
        \end{split}
    \end{equation*}
\qed
\end{proof}

We remark that Pisanski's result also excludes Theorem \ref{thm:mainResult2} since the path is not a regular graph.


%
%



\end{document}